\newtheorem{theorem}{Theorem}
\newtheorem{lemma}[theorem]{Lemma}
\begin{document}

\title[Cell decompositions of Teichm\"uller spaces]{Cell decompositions of Teichm\"uller spaces of surfaces with boundary}

\author{Ren Guo}

\address{School of Mathematics, University of Minnesota, Minneapolis, MN, 55455}

\email{guoxx170@math.umn.edu}

\author{Feng Luo}

\address{Department of Mathematics, Rutgers University, Piscataway, NJ, 08854}

\email{fluo@math.rutgers.edu}

\thanks{The work of the second named author is supported in part by the NSF}

\subjclass[2000]{Primary 57M50, Secondary 30F60}

\keywords{Teichm\"uller space, cell decomposition, arc complex, Delaunay decomposition, $\psi_h$-coordinates.}

\begin{abstract}
A family of coordinates $\psi_h$ for the Teichm\"uller space of a
compact surface with boundary was introduced in \cite{l2}. In the
work \cite{m1}, Mondello showed that the coordinate $\psi_0$ can
be used to produce a natural cell decomposition of the
Teichm\"uller space invariant under the action of the mapping
class group. In this paper, we show that the similar result also
works for all other coordinate $\psi_h$ for any $h \geq 0$.
\end{abstract}

\maketitle

\section{Introduction}

In this note, we show that each of the  coordinate $\psi_h$ ($h\geq 0$)
introduced in \cite{l2} can be used to produce a natural cell
decomposition of the Teichm\"uller space of a compact surface with non-empty boundary and
negative Euler characteristic. We will show that the
underlying point sets of the cells are the same as the one
obtained in the previous work of Ushijima \cite{u}, Hazel \cite{ha}, Mondello \cite{m1}.
However, the coordinates $\psi_h$ for $h \geq 0$ introduce
different attaching maps for the cell decomposition. In the sequel, unless mentioned otherwise, we will always assume that the surface $S$ is compact with non-empty boundary so that the Euler characteristic of $S$ is negative.

\subsection{The arc complex}

We begin with a brief recall of the related concepts. An \textit{essential arc} $a$ in $S$ is an embedded arc with boundary in $\partial S$ so that $a$ is not
homotopic into $\partial S$ relative to $\partial S$.  The arc
complex $A(S)$ of the surface, introduced by J. Harer \cite{har1}, is
the simplicial complex so that each vertex is the homotopy
class $[a]$ of an essential arc $a$, and its simplex
is a collection of distinct vertices $[a_1],\ldots,[a_k]$ such that $a_i \cap a_j = \emptyset$ for all $i \neq j$.
For instance, the isotopy class of an ideal triangulation corresponds to a simplex
 of maximal dimension in $A(S)$. The non-fillable subcomplex $A_\infty(S)$ of $A(S)$ consists
of those simplexes $([a_1],\ldots,[a_k])$ such that one component of
$S-\cup_{i=1}^{k}a_i$ is not simply connected. The simplices in $A(S)-A_\infty(S)$ are called fillable. The underlying space of $A(S)-A_\infty(S)$ is denoted by $|A(S)-A_\infty(S)|$.

\subsection{The Teichm\"uller space}

It is well-known that there are
hyperbolic metrics with totally geodesic boundary on the surface $S$. Two hyperbolic metrics with geodesic boundary on $S$ are called isotopic if there is an isometry
isotopic to the identity between them. The space of all isotopy classes
of hyperbolic metrics with geodesic boundary on $S$ is called the Teichm\"uller space of the surface $S$, denoted by $Teich(S)$. Topologically, $Teich(S)$ is homeomorphic to a ball of dimension $6g-6+3n$ where $g$ is the genus and $n>0$ is the number of boundary components of $S$.

\begin{theorem}[Ushijima \cite{u}, Hazel \cite{ha}, Mondello \cite{m1}]\label{thm:0} There is a natural cell decomposition of the Teichm\"uller space $Teich(S)$ invariant under the action of the mapping class group.
\end{theorem}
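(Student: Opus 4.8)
The plan is to identify $Teich(S)$, via the canonical Delaunay-type decomposition of a hyperbolic metric with geodesic boundary, with the space of positive weight functions on essential arcs supported on the fillable simplices of $A(S)$, and then transport the tautological cell structure of the latter back to $Teich(S)$.

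First I would attach to each $m \in Teich(S)$ a canonical decomposition $\Delta(m)$ of $(S,m)$. The convenient route is a convex-hull construction in the spirit of Epstein--Penner, adapted to geodesic boundary: lift $(S,m)$ to its universal cover in the hyperbolic plane $\mathbf{H}^2$, view the lifts of the boundary geodesics as a locally finite family of spacelike support planes in Minkowski $3$-space, take the boundary of the convex hull of the resulting configuration, and push the induced cell structure back down to $S$. Since no choices enter beyond $m$, the decomposition $\Delta(m)$ is canonical and mapping-class-group equivariant, $\phi\cdot\Delta(m)=\Delta(\phi\cdot m)$. I would keep in parallel the description of $\Delta(m)$ as the decomposition dual to the cut locus of $\partial S$ in $(S,m)$, whose edges are characterized by a local empty-disk (Delaunay) inequality; the convex-hull picture is best for functoriality, the Delaunay picture for the coordinate computations below. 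The $1$-skeleton of $\Delta(m)$ is a system of pairwise disjoint essential arcs meeting $\partial S$ perpendicularly, and since every complementary region is a polygon based on $\partial S$, one checks that its isotopy class is a fillable simplex $\sigma(m)\in A(S)-A_\infty(S)$.

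Next I would assign weights. To each arc $e$ of $\Delta(m)$ attach the geometric quantity defining the coordinate $\psi_h$ (for $h=0$, Mondello's choice in \cite{m1}), normalized to be positive exactly when $e$ is present in $\Delta(m)$ and to tend to $0$ precisely as $\Delta$ degenerates across the wall on which $e$ disappears. This yields a continuous, mapping-class-group equivariant map $\Phi_h\colon Teich(S)\to\widehat{A}$, where $\widehat{A}$ is the space of nonzero functions $w$ from isotopy classes of essential arcs to the nonnegative reals whose support $\mathrm{supp}(w)$ is a fillable simplex; equivalently $\widehat{A}$ is the open cone on $|A(S)-A_\infty(S)|$ with the cone point deleted. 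The heart of the argument is that $\Phi_h$ is a homeomorphism. For injectivity, and for the claim that $\Phi_h$ restricts to a bijection of $\{m:\sigma(m)=\sigma\}$ onto $\widehat{A}_\sigma:=\{w:\mathrm{supp}(w)=\sigma\}$, I would fix an ideal triangulation $T$ refining $\sigma$, use that $\psi_h$ provides a global chart of $Teich(S)$ onto an explicit region in $\mathbf{R}^{6g-6+3n}$ by \cite{l2}, show that $\{m:\sigma(m)=\sigma\}$ is cut out inside this chart by the Delaunay equalities for the edges of $T$ not in $\sigma$ together with strict Delaunay inequalities for those in $\sigma$, and observe that on this locus the weights of the arcs of $\sigma$ recover the surviving coordinates. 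Surjectivity then reduces to checking that these Delaunay sub-cones, over all refinements $T\supseteq\sigma$ and all fillable $\sigma$, are disjoint and exhaust the $\psi_h$-image of $Teich(S)$. I expect the main obstacle to be precisely this translation --- rewriting the metric condition ``$e$ is an edge of the canonical decomposition'' as an explicit (in)equality in $\psi_h$-coordinates and verifying that the resulting sub-cones fit together correctly --- together with the properness of $\Phi_h$, equivalently the continuity of $\Phi_h^{-1}$: a sequence of metrics cannot run to the frontier of $Teich(S)$ while its weights stay in a compact subset of $\widehat{A}$, and ruling this out needs a genuine degeneration argument. Since $\dim Teich(S)=6g-6+3n$ equals the number of arcs in an ideal triangulation, hence the local dimension of $\widehat{A}$, invariance of domain then upgrades the continuous equivariant bijection $\Phi_h$ to a homeomorphism.

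Finally, $\widehat{A}$ carries the tautological decomposition into open cells $\widehat{A}_\sigma$, one for each fillable simplex $\sigma$ of $A(S)$ (with $\widehat{A}_\sigma$ an open orthant of dimension equal to the number of arcs in $\sigma$), and the mapping class group permutes these cells by permuting arcs. Pulling back along the equivariant homeomorphism $\Phi_h$ gives $Teich(S)=\bigsqcup_\sigma \Phi_h^{-1}(\widehat{A}_\sigma)$, a cell decomposition invariant under the mapping class group, which proves Theorem \ref{thm:0}. I would close by remarking that the underlying partition $\{\Phi_h^{-1}(\widehat{A}_\sigma)\}$ is intrinsic --- it records only the combinatorial type $\sigma(m)$ of the canonical decomposition $\Delta(m)$, which does not involve $h$ --- so it agrees as a point-set with the decompositions of \cite{u}, \cite{ha}, \cite{m1}; what varies with $h\ge 0$ is only the attaching homeomorphisms $\Phi_h$, which is the phenomenon the remainder of the paper takes up.
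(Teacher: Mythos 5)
Your overall architecture is the same as the paper's: attach to each metric its canonical Delaunay decomposition, weight its arcs by the $\psi_h$-quantity, and obtain an equivariant homeomorphism onto (the open cone over) $|A(S)-A_\infty(S)|$, from which the cell decomposition is pulled back; the only cosmetic difference is that you favor an Epstein--Penner/Ushijima convex-hull picture where the paper works directly with the spine (cut locus of $\partial S$) and its dual, following Bowditch--Epstein and Kojima. However, two points are genuine gaps rather than routine details. First, the step you explicitly defer (``rewriting the metric condition that $e$ is an edge of the canonical decomposition as an (in)equality in $\psi_h$-coordinates'') is the actual mathematical content of the proof. The paper supplies it as Lemma \ref{thm:hexagon}: in a right-angled hexagon, $a+b-c$ equals $+2|X_iB_i|$, $0$, or $-2|X_iB_i|$ according to whether the incenter lies on the hexagon's side of the edge, on the edge, or beyond it. This is what shows that on a Delaunay edge the weight $\pi_h(e)$ coincides with the coordinate $\psi_h(e)$ of any refining ideal triangulation (giving injectivity via Theorem \ref{thm:embedding}), and that edges added to subdivide a non-hexagonal Delaunay cell have $\psi_h=0$. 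For surjectivity you also need the explicit characterization of the image of $\Psi_h$ (Theorem \ref{thm:image}: positivity of sums over edge cycles), the observation that a vector vanishing exactly on the added edges still satisfies that condition because no edge cycle consists solely of added edges, and the verification that the resulting metric really has the prescribed decomposition as its Delaunay decomposition (coincidence of the incircles of the subdividing hexagons). None of this is carried out in your proposal, and without it the ``Delaunay sub-cones are disjoint and exhaust the image'' claim is unsupported.

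Second, the invariance-of-domain step does not work as stated: the target $\widehat{A}$ is not known a priori to be a manifold near points lying over non-maximal fillable simplices --- that it is locally Euclidean there is essentially equivalent to the theorem you are proving --- so you cannot invoke invariance of domain to upgrade the continuous bijection. The paper avoids this by proving continuity of $\Pi_h$ and of $\Pi_h^{-1}$ directly (in the style of \S8--9 of Bowditch--Epstein), reducing both to convergence of $\psi_h$-coordinates with respect to a fixed refining ideal triangulation and then applying Theorem \ref{thm:embedding}. Your remark about properness and a degeneration argument points in the right direction and would indeed suffice (a proper continuous bijection between locally compact Hausdorff spaces is a homeomorphism, and the fillable part of the arc complex is locally finite), but as written the final upgrade rests on an inapplicable tool.
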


Ushijima \cite{u} proved this theorem by following Penner's convex hall construction \cite{p}. Following Bowditch-Epstein's approach \cite{be}, Hazel \cite{ha} obtained a cell decomposition of the Teichm\"uller space of surfaces with geodesic boundary and fixed boundary lengths. In \cite{l1}, the second named author introduced $\psi_0$-coordinate to parameterize the Teichm\"uller space $Teich(S)$ of a surface $S$ with a fixed ideal triangulation. Mondello \cite{m1} pointed out that the $\psi_0$-coordinate produces a natural cell decomposition of $Teich(S)$.

In \cite{l2}, for each real number $h$, the second named author introduced the $\psi_h$-coordinates to parameterize $Teich(S)$ of a surface $S$ with a fixed ideal triangulation. The $\psi_0$-coordinate is a special case of the $\psi_h$-coordinates.

The main theorem of the paper is the following. 

\begin{theorem}\label{thm:main}
Suppose $S$ is a compact surface with non-empty boundary and negative Euler characteristic. For each $h\geq 0$, there is a homeomorphism
$$\Pi_h: Teich(S)\to |A(S)-A_\infty(S)|\times \mathbb{R}_{>0}$$
equivariant under the action of the mapping class group so that the restriction of $\Pi_h$ on each simplex of maximal dimension is given by the $\psi_h$-coordinate. In
particular, this map produces a natural cell decomposition of the moduli space
of surfaces with boundary.
\end{theorem}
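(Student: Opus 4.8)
The plan is to follow the strategy of Bowditch--Epstein \cite{be} and Mondello \cite{m1}: keep the underlying cell structure supplied by Theorem~\ref{thm:0} and install the $\psi_h$-coordinate as the attaching map of each top cell. First I would recall that to each $m\in Teich(S)$ one attaches a canonical Delaunay decomposition $\Delta(m)$ of $S$ into right-angled hyperbolic polygons --- via Penner's convex-hull construction as in Ushijima \cite{u}, or the Bowditch--Epstein spine as in Hazel \cite{ha} --- a genuine (disk) ideal cell decomposition whose $1$-skeleton spans a fillable simplex $\sigma(m)\in A(S)-A_\infty(S)$. For an ideal triangulation $T$ with edge set $E(T)$, put $\mathcal{C}_T=\{m : \sigma(m)\text{ is a face of }T\}$, the closed set of metrics for which $T$ is Delaunay. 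These cover $Teich(S)$; their interiors $\mathrm{int}\,\mathcal{C}_T=\{m:\sigma(m)=T\}$ are disjoint; $\mathcal{C}_T\cap\mathcal{C}_{T'}=\bigcup\{\mathcal{C}_\sigma : \sigma\text{ fillable},\ E(\sigma)\subseteq E(T)\cap E(T')\}$; and this is the ideal cell decomposition behind Theorem~\ref{thm:0}, under which $\mathcal{C}_T$ matches the standard chart $O_T:=\{w\in\mathbb{R}^{E(T)}_{\ge 0}\setminus\{0\} : \mathrm{supp}(w)\text{ fillable}\}$ over the top simplex $\sigma_T$ of $|A(S)-A_\infty(S)|\times\mathbb{R}_{>0}$ (the $O_T$ covering the target, overlaps obtained by adjoining zero coordinates). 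Finally I would introduce, for each edge $e$ of $T$, the Delaunay function $\delta_e\colon Teich(S)\to\mathbb{R}$ coming from the right-angled-hexagon geometry of the one or two triangles of $T$ meeting $e$, so that $\mathcal{C}_T=\bigcap_e\{\delta_e\ge0\}$ and, for $m\in\mathcal{C}_T$, $e\in\Delta(m)$ if and only if $\delta_e(m)>0$ (whence $\{e:\delta_e(m)>0\}=E(\sigma(m))$ is fillable there).

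By \cite{l2} the $\psi_h$-coordinate is, for each $T$, a homeomorphism $\phi_{h,T}$ from $Teich(S)$ onto an open subset of $\mathbb{R}^{E(T)}$. The crux is a per-edge sign computation extending Mondello's $h=0$ case, which I would isolate as a lemma: \emph{for every $h\ge0$, every $T$, every $e\in E(T)$ and every $m$, the $e$-th coordinate of $\phi_{h,T}(m)$ vanishes precisely when $\delta_e(m)=0$, and otherwise has the same sign as $\delta_e(m)$.} Both sides depend only on the hyperbolic metric of the union of the (at most two) triangles of $T$ at $e$, so this reduces to an identity and an inequality among the side lengths of right-angled hexagons, the hypothesis $h\ge0$ entering through the monotonicity and convexity properties of $\psi_h$ proved in \cite{l2}; this is where I expect the real work to lie. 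Granting the lemma, $\phi_{h,T}$ carries $\mathcal{C}_T=\bigcap_e\{\delta_e\ge0\}$ onto $\phi_{h,T}(Teich(S))\cap\mathbb{R}^{E(T)}_{\ge 0}$; on $\mathcal{C}_T$ the set of positive coordinates equals $E(\sigma(m))$, which is fillable and never empty (as $\Delta(m)$ always has an edge), so this image lies in $O_T$; and since $\mathbb{R}^{E(T)}_{>0}$ is connected, meets $\phi_{h,T}(Teich(S))$ (the interior of $\mathcal{C}_T$ is nonempty, every ideal triangulation being Delaunay for some metric by Theorem~\ref{thm:0}), and has --- by the lemma --- preimage the open set $\mathrm{int}\,\mathcal{C}_T=\{m:\delta_e(m)>0\ \text{for all }e\}$ with open complement in the orthant, one gets $\phi_{h,T}(\mathrm{int}\,\mathcal{C}_T)=\mathbb{R}^{E(T)}_{>0}$ and hence $\phi_{h,T}(\mathcal{C}_T)=O_T$, a homeomorphism carrying the face $\{\delta_e=0\text{ for }e\in J\}$ onto $\{w_e=0\text{ for }e\in J\}$. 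This is exactly the statement that $\Pi_h$ restricted to the top simplex $\sigma_T$ is the $\psi_h$-coordinate.

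It remains to glue these and to check the global properties. On a common face $\mathcal{C}_\sigma\subseteq\mathcal{C}_T\cap\mathcal{C}_{T'}$ the lemma forces all coordinates indexed by $E(T)\setminus E(\sigma)$, resp.\ $E(T')\setminus E(\sigma)$, to vanish, so $\phi_{h,T}$ and $\phi_{h,T'}$ both map $\mathcal{C}_\sigma$ into the same subset of $|A(S)-A_\infty(S)|\times\mathbb{R}_{>0}$, the chart over $\sigma$; they agree there because the edge-flip transition formulas for the $\psi_h$-coordinates from \cite{l2}, specialized on the locus where the flipped-edge coordinate is $0$, act as the identity on the remaining coordinates, so the common value depends only on the pair $(m,\Delta(m))$. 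The resulting map $\Pi_h\colon Teich(S)\to|A(S)-A_\infty(S)|\times\mathbb{R}_{>0}$ is then a bijection (the $\mathcal{C}_T$ cover the source and the $O_T$ the target, compatibly), is continuous (the $\mathcal{C}_T$ form a locally finite closed cover), and is a homeomorphism by invariance of domain, since by Theorem~\ref{thm:0} the target is a topological manifold of dimension $6g-6+3n=\dim Teich(S)$. As the Delaunay decomposition $m\mapsto\Delta(m)$, the cells $\mathcal{C}_T$, the functions $\delta_e$, and the $\psi_h$-coordinates relative to $T$ are all natural with respect to the mapping class group action on metrics and on ideal triangulations, $\Pi_h$ is equivariant; hence the cell decomposition of $Teich(S)$ it induces is mapping-class-group invariant and descends to a cell decomposition of moduli space. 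The only genuinely new ingredient in this scheme is the sign lemma of the second paragraph; the remainder is assembled from \cite{l2} and Theorem~\ref{thm:0}.
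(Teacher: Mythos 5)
Your architecture (take the Delaunay cells $\mathcal{C}_T$ from Theorem \ref{thm:0}, show that the $\psi_h$-chart carries each cell onto the corresponding orthant chart $O_T$, then glue) is broadly parallel to the paper's, and the geometric heart you point to --- the sign relation between the $e$-th $\psi_h$-coordinate and the Delaunay defect at $e$ --- is exactly the paper's Lemma \ref{thm:hexagon} (the identity $|A_iB_i|+|A_{i+1}B_{i+1}|-|A_{i+2}B_{i+2}|=\pm 2|X_iB_i|$ or $0$ according to the position of the incenter relative to the edge), from which $\pi_h(e)=\psi_h(e)$ on Delaunay edges and $\psi_h(e)=0$ on edges subdividing a Delaunay $2$-cell. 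However, your onto-ness step has a genuine gap. You import from \cite{l2} only that $\phi_{h,T}$ is a homeomorphism of $Teich(S)$ onto \emph{some} open subset of $\mathbb{R}^{E(T)}$, and then try to deduce $\phi_{h,T}(\mathrm{int}\,\mathcal{C}_T)=\mathbb{R}^{E(T)}_{>0}$ by connectedness; that argument requires the image to be closed in the open orthant (your phrase ``open complement in the orthant''), which does not follow from openness of the image and is not justified by your sign lemma --- nothing in your setup excludes a boundary point of $\Psi_h(Teich(S))$ lying inside the orthant. This is precisely where the paper invokes its second input, Theorem \ref{thm:image}: the image of $\Psi_h$ is the explicit convex polytope cut out by the edge-cycle inequalities, so the positive orthant, and more importantly the nonnegative vectors with zeros exactly on the added edges (no edge cycle consists only of added edges), are realized; this is also how the boundary faces of $O_T$ are hit, a point your connectedness argument does not address, so ``$\phi_{h,T}(\mathcal{C}_T)=O_T$'' does not follow as written.

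Two further points. Your gluing on overlaps is justified by ``edge-flip transition formulas for the $\psi_h$-coordinates from \cite{l2}'' specializing to the identity when the flipped coordinate vanishes; no such formulas are in \cite{l2}, and for general $h$ there is no closed-form flip rule to specialize. What actually makes the charts agree is again Lemma \ref{thm:hexagon}: when the subdividing edges have vanishing coordinate, all hexagons of a Delaunay $2$-cell share its inscribed circle, so every triangulation refining the Delaunay decomposition computes the same intrinsic tangent-length quantity $\pi_h(e)$; the paper avoids the consistency issue entirely by defining $\Pi_h$ intrinsically from the Delaunay decomposition and only then comparing with $\psi_h$ on a completion to an ideal triangulation. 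Finally, the statements you place in the setup --- $\mathcal{C}_T=\bigcap_e\{\delta_e\ge 0\}$, the equivalence $e\in\Delta(m)\Leftrightarrow\delta_e(m)>0$ on $\mathcal{C}_T$, and local finiteness of the closed cover --- are substantive local-to-global Delaunay facts not literally contained in Theorem \ref{thm:0}; the forward half is what the paper proves in its injectivity section and the converse is its surjectivity argument, while continuity in both directions is proved there by direct convergence arguments following \cite{be} rather than by local finiteness. The plan can be completed, but only after adding Theorem \ref{thm:image} and carrying out the hexagon computation you deferred; as written, the surjectivity argument fails and the gluing cites unavailable formulas.
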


We will show that the underlying cell-structures for various $h'$s are the same.

\subsection{Related results}

For a punctured surface $S$ with weights on each puncture, the classical Teichm\"uller space of $S$
admits cell decompositions. This was first proved by Harer \cite{har1}
and Thurston (unpublished) using Strebel's work on quadratic
differentials and flat cone metrics. The corresponding result in
the context of hyperbolic geometry was proved by Bowditch-Epstein \cite{be} and
Penner \cite{p} using complete hyperbolic metrics of finite area on $S$ so
that each cusp has an assigned horocycle. The constructions in
\cite{be} and \cite{p} are more geometrically oriented. Indeed, the construction of spines and
Delaunay decompositions based on a given set of points and
horocycles are used in \cite{be}. Our approach is the same as that of \cite{be}
using Delaunay decompositions. The existence of such Delaunay
decompositions for compact hyperbolic manifolds with geodesic
boundary was established in the work of Kojima \cite{k} for 3-manifolds. However, the same method of proof in \cite{k} also works for compact hyperbolic surfaces. Our main
observation in this paper is that those $\psi_h$-coordinates
introduced in \cite{l2} capture the Delaunay condition well.

\subsection{Plan of the paper}

In section 2, we recall the definition and properties of $\psi_h$-coordinates
 which will be used in the proof of Theorem \ref{thm:main}.
 In section 3, we prove a simple lemma which clarifies the
 geometric meaning of $\psi_h$-coordinates. In section 4,
 we review the Delaunay decomposition associated to a hyperbolic
 metric following Bowditch-Epstein \cite{be} and Kojima \cite{k}. Theorem \ref{thm:main} is proved in section 5.

\section{$\psi_h$-coordinates}

An ideal triangulated compact surface with boundary $(S, T )$ is obtained by
removing a small open regular neighborhood of the vertices of a triangulation of
a closed surface. The edges of an ideal triangulation $T$ correspond bijectively
to the edges of the triangulation of the closed surface. Given a hyperbolic
metric $d$ with geodesic boundary on an ideal triangulated surface $(S, T)$, there
is a unique geometric ideal triangulation $T^*$ isotopic to $T$ so that all edges are
geodesics orthogonal to the boundary. The edges in $T^*$ decompose the surface
into hyperbolic right-angled hexagons.

\begin{figure}[htbp]
\begin{center}
\includegraphics[scale=.3]{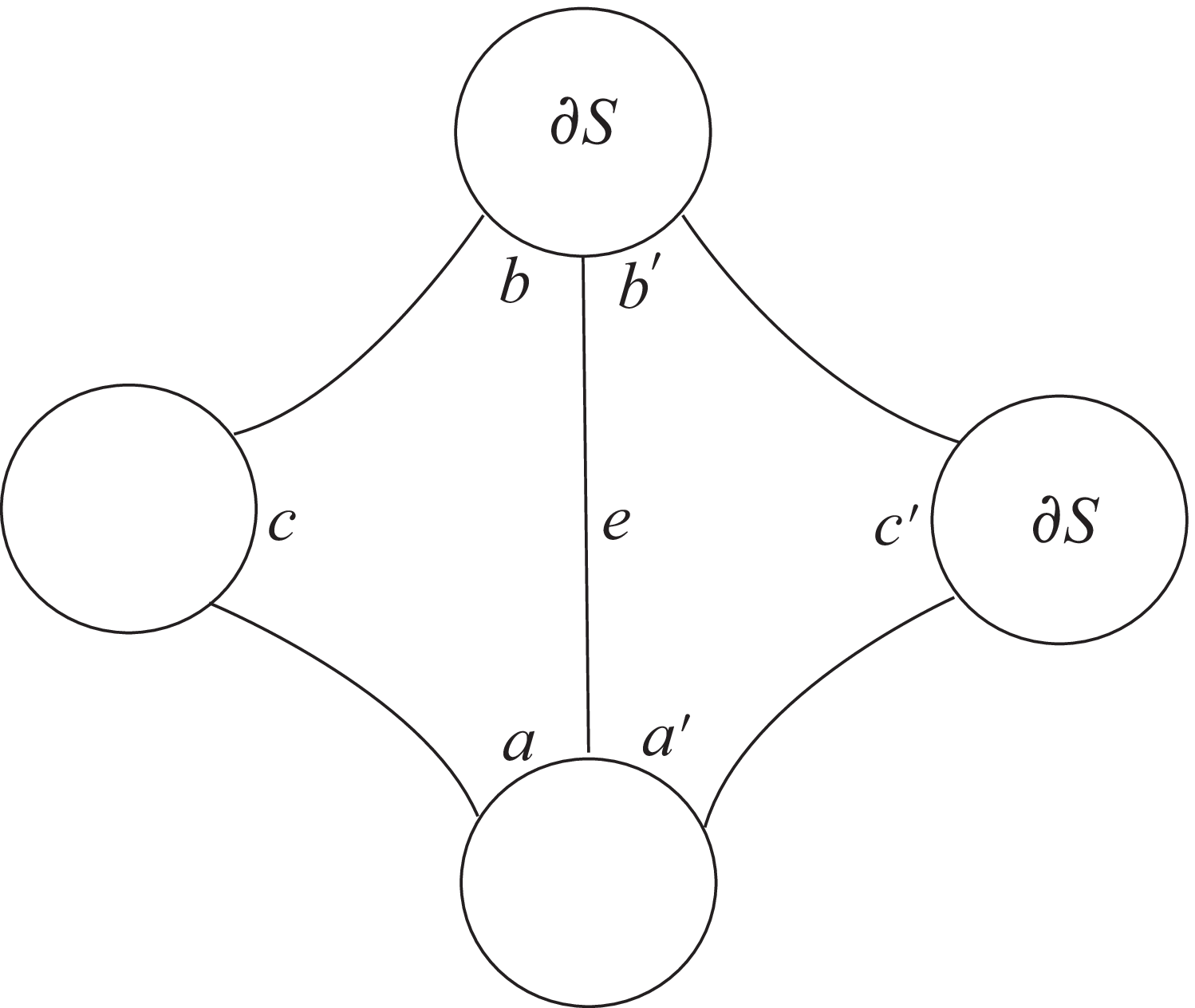}
\end{center}
\caption{\label{fig:invariant}}
\end{figure}

Let $E$ be the set of edges in $T$. For any real number $h$, the $\psi_h$-coordinate of a hyperbolic metric introduced in \cite{l2} is defined as $\psi_h: E \rightarrow \mathbb{R}$,
$$\psi_h(e)=\int_0^{\frac{a+b-c}{2}} \cosh^h(t) dt +
\int_0^{\frac{a'+b'-c'}{2}} \cosh^h(t)dt$$
where $e$ is an
edge shared by two hyperbolic
right-angled hexagons and $c, c'$ are lengths of arcs in the boundary of $S$
facing $e$ and $a, a', b, b'$ are the lengths of arcs in the boundary of $S$ adjacent
to $e$ so that $a, b, c$ lie in a hexagon. See Figure \ref{fig:invariant}.

Now consider the map
$\Psi_h: Teich(S) \to \mathbb{R}^E$ sending a hyperbolic
metric $d$ to its $\psi_h$-coordinate. The following two theorems are proved in \cite{l2}.

\begin{theorem}[\cite{l2}]\label{thm:embedding} Fix an ideal triangulation of $S$.
For each $h \in  \mathbb{R}$, the map $\Psi_h: Teich(S) \to
\mathbb{R}^E$ is a smooth embedding.
\end{theorem}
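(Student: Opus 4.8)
The plan is to prove, in this order, that $\Psi_{h}$ is smooth, that it is an injective immersion, and — observing that the target $\mathbb{R}^{E}$ has the same dimension as $Teich(S)$ — to deduce that $\Psi_{h}$ is a smooth embedding. Smoothness is routine. Fixing the ideal triangulation $T$, every metric $d\in Teich(S)$ determines a unique geodesic realization $T^{*}$ with all edges orthogonal to $\partial S$, and $T^{*}$ together with the six side-lengths of each right-angled hexagon of $S-\bigcup_{e}e^{*}$ depends real-analytically on $d$; composing these side-lengths with the smooth function $t\mapsto\int_{0}^{t}\cosh^{h}(s)\,ds$ exhibits each $\psi_{h}(e)$ as a smooth function on $Teich(S)$. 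For the dimension count, an Euler-characteristic computation shows that an ideal triangulation of $S$ has exactly $|E|=6g-6+3n=\dim Teich(S)$ edges. Hence, once $\Psi_{h}$ is shown to be an injective immersion it is a local diffeomorphism (equal dimensions), so it is open, and an injective local diffeomorphism is a diffeomorphism onto an open subset of $\mathbb{R}^{E}$, i.e. a smooth embedding (alternatively, invariance of domain gives the same conclusion once injectivity and smoothness are known).

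For the injective-immersion part I would use a convex-potential argument, reducing everything to a single hexagon. Adopt the edge-length coordinates $\ell=(\ell_{e})_{e\in E}$, which identify $Teich(S)$ with the convex open set $\mathbb{R}^{E}_{>0}$: every assignment of positive lengths to the edges of $T$ is realized by a unique metric, since a right-angled hexagon is freely determined by the triple of its pairwise non-adjacent ``interior'' side-lengths, and the gluing of two hexagons across a shared edge is rigid because that edge meets $\partial S$ orthogonally, leaving no shearing parameter. In these coordinates each hexagon $H$ is determined by its three edge-lengths, and the lengths $a,b,c$ of its boundary arcs — hence also the quantities $\tfrac{a+b-c}{2}$ and $\int_{0}^{(a+b-c)/2}\cosh^{h}$ — are real-analytic functions of those three variables via the right-angled-hexagon cosine law. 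The key lemma is: for a hyperbolic right-angled hexagon there is a smooth function $W_{h}$ of the three edge-lengths, with everywhere negative definite Hessian, whose partial derivative in the $\ell_{e}$-direction equals the contribution $\int_{0}^{(a+b-c)/2}\cosh^{h}(t)\,dt$ of $H$ to $\psi_{h}(e)$. Granting this, $F_{h}:=\sum_{H}W_{h}^{(H)}$ is a smooth function on $\mathbb{R}^{E}_{>0}$ with $\nabla F_{h}=\Psi_{h}$; its Hessian, a sum over hexagons of negative semidefinite matrices, is in fact negative definite, since for any nonzero $v$ one may pick an edge $e$ with $v_{e}\neq 0$ and a hexagon $H\ni e$, for which the corresponding summand is strictly negative on $v$. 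Thus $\Psi_{h}=\nabla F_{h}$ has everywhere invertible derivative, so it is an immersion; and a negative definite Hessian makes $F_{h}$ strictly concave, whence $\nabla F_{h}$ is injective: if $\nabla F_{h}(\ell)=\nabla F_{h}(\ell')$ with $\ell\neq\ell'$, then $t\mapsto F_{h}(\ell+t(\ell'-\ell))$ would have equal derivatives at $t=0$ and $t=1$ yet be strictly concave, a contradiction. This gives the required injective immersion.

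The main obstacle is the key lemma, a statement about the single hyperbolic right-angled hexagon, with two ingredients. First, closedness: the $1$-form $\sum_{e}\bigl(\int_{0}^{(a+b-c)/2}\cosh^{h}\bigr)d\ell_{e}$ on the three-dimensional space of hexagons must be closed — equivalently, the Jacobian of the map carrying the three edge-lengths to the three integrals $\int_{0}^{(a+b-c)/2}\cosh^{h}$ is symmetric. This is a ``derivative cosine law'' identity, obtained by differentiating the hexagon relations; it can also be verified by exhibiting an explicit primitive $W_{h}$. Second, definiteness of that symmetric Jacobian, which is where the hyperbolic geometry of the hexagon enters. Here it helps to note that $\psi_{h}$ is obtained from the $\tfrac{a+b-c}{2}$-type quantities by post-composition with the fixed increasing diffeomorphism $t\mapsto\int_{0}^{t}\cosh^{h}$ (this is the only place $\cosh^{h}(t)>0$ is used), so for arbitrary $h$ the required negative definiteness can be read off from the $h=0$ statement established by Luo for $\psi_{0}$, once one checks that such a post-composition preserves the symmetric-negative-definite structure of the Jacobian. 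Combining the per-hexagon lemma with the additivity $F_{h}=\sum_{H}W_{h}^{(H)}$ finishes the argument.
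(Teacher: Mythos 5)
You should first note that the paper does not prove this statement at all: it is quoted from Luo \cite{l2}, so your argument has to stand on its own. Its scaffolding is fine (edge lengths give global coordinates $Teich(S)\cong\mathbb{R}^E_{>0}$, $|E|=6g-6+3n$, an injective immersion in equal dimensions is an embedding, and summing per-hexagon potentials and using strict concavity would indeed give injectivity), but the key lemma on which everything rests is false as you state it. Write a right-angled hexagon with triangulation-edge lengths $x_1,x_2,x_3$ and opposite boundary arcs $y_1,y_2,y_3$, so the hexagon's contribution to $\psi_h$ at the edge $x_i$ is $\int_0^{z_i}\cosh^h$ with $z_i=\tfrac{y_j+y_k-y_i}{2}$. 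From $\cosh y_i=\frac{\cosh x_i+\cosh x_j\cosh x_k}{\sinh x_j\sinh x_k}$ one gets
\begin{equation*}
\frac{\partial y_i}{\partial x_i}=\frac{\sinh x_i}{\sinh y_i\,\sinh x_j\sinh x_k},\qquad
\frac{\partial y_i}{\partial x_j}=-\frac{\cosh y_k\,\sinh x_i}{\sinh y_i\,\sinh x_j\sinh x_k}\quad(i\neq j),
\end{equation*}
and a short computation shows that already for $h=0$
\begin{equation*}
\frac{\partial z_1}{\partial x_2}-\frac{\partial z_2}{\partial x_1}\;\longrightarrow\;\frac{\sinh x_1-\sinh x_2+\sinh(x_2-x_1)}{2\sinh x_1\sinh x_2}\neq 0\qquad(x_3\to\infty,\ x_1\neq x_2),
\end{equation*}
so the matrix $\bigl[\partial z_i/\partial x_j\bigr]$ is not symmetric in the edge-length coordinates. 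Hence no potential $W_0$ (let alone $W_h$) with $\partial W_h/\partial \ell_e$ equal to the hexagon's contribution to $\psi_h(e)$ exists in these variables, and the 1-form $\sum_e(\cdot)\,d\ell_e$ you propose to integrate is not closed. The variational proof can be salvaged, but only after a nontrivial, $h$-dependent monotone change of variables on the edge-length side (this is what is actually done in \cite{l2}); your proposal never identifies those coordinates.

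The second ingredient is also unsound: you claim the case of general $h$ follows from $h=0$ because post-composition with the increasing diffeomorphism $t\mapsto\int_0^t\cosh^h$ "preserves the symmetric-negative-definite structure of the Jacobian." It does not: the new Jacobian is $D\,J$ with $D=\mathrm{diag}(\cosh^h z_i)$ positive diagonal, and a product of a positive diagonal matrix with a symmetric negative definite matrix is in general not symmetric; what survives is only that its eigenvalues are real and negative. So invertibility of $d\Psi_h$ (the immersion part) would survive such a reduction, but the gradient-of-a-strictly-concave-function structure — which is exactly what your injectivity argument uses — is destroyed. Since injectivity is the real content of the theorem, the proposal as written does not prove it; one must establish the closedness and definiteness for each $h$ in correctly matched coordinates on source and target, as in \cite{l2}, rather than deduce them from the $h=0$ case by post-composition.
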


An \it edge cycle \rm $(e_1, H_1,..., e_n, H_n)$ is a
collection of hexagons and edges in an ideal triangulation so that
two adjacent hexagons $H_{i-1}$ and $H_i$ share the edge $e_i$
for $i=1,...,n$ where $H_0=H_n$. 

\begin{theorem}[\cite{l2}]\label{thm:image} Fix an ideal triangulation of $S$. For each $h\geq 0$, $\Psi_h(Teich(S)) =\{ z \in \mathbb{R}^E | $ for each edge cycle $(e_1, H_1,..., e_n, H_n)$, $\sum_{i=1}^n z(e_i)
>0$\}. Furthermore, the image $\Psi_h(Teich(S))$ is a convex polytope.
\end{theorem}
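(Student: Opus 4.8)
The plan is to prove the asserted equality by the two inclusions and then read off convexity. Throughout put $F(u)=\int_0^u\cosh^h t\,dt$, so that $F$ is odd and strictly increasing, $F'(u)=\cosh^h u$ is even and (for $h>0$) strictly increasing in $|u|$, and $\psi_h(e)=F(\frac{a+b-c}{2})+F(\frac{a'+b'-c'}{2})$. Write $P\subseteq\mathbb{R}^E$ for the subset defined by the edge-cycle inequalities. I would obtain the inclusion $\Psi_h(Teich(S))\subseteq P$ from a direct computation inside the right-angled hexagons, and the reverse inclusion from Theorem~\ref{thm:embedding} together with properness of $\Psi_h$ as a map into $P$ and invariance of domain; convexity is then immediate from the explicit description of $P$.

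For the first inclusion, fix a metric $d$ and an edge cycle $(e_1,H_1,\dots,e_n,H_n)$. Each $\psi_h(e_j)$ is a sum of two single integrals, one coming from each of the two hexagons $H_{j-1},H_j$ meeting along $e_j$; regrouping $\sum_{j=1}^{n}\psi_h(e_j)$ by hexagon, the hexagon $H_i$ contributes exactly the two single integrals attached to the two of its edge-sides that occur in the cycle. Reading the boundary-arc lengths of $H_i$ off Figure~\ref{fig:invariant}, this contribution equals $F(x_i)+F(\ell_i-x_i)$, where $\ell_i>0$ is the length of the boundary arc of $H_i$ lying between those two edge-sides and $x_i\in\mathbb{R}$. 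The function $g(x)=F(x)+F(\ell_i-x)$ is strictly convex when $h>0$ (it is the constant $\ell_i$ when $h=0$), so it attains its minimum at the unique critical point $x=\ell_i/2$, where its value is $2F(\ell_i/2)>0$; hence each hexagon contributes a positive amount and $\sum_{j=1}^{n}\psi_h(e_j)>0$, i.e.\ $\Psi_h(d)\in P$. A hexagon occurring twice in the same cycle is handled by the same elementary estimate applied to the boundary arcs involved.

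For the reverse inclusion, note first that, viewing an edge cycle as a closed walk in the dual graph of the triangulation, its inequality is a consequence of those attached to the finitely many simple edge cycles; thus $P$ is a nonempty open convex subset of $\mathbb{R}^E$ cut out by finitely many strict linear inequalities, and the ``convex polytope'' assertion will follow once equality is established. By Theorem~\ref{thm:embedding}, $\Psi_h$ is a smooth embedding of the $(6g-6+3n)$-ball $Teich(S)$ into $\mathbb{R}^E$, a vector space of the same dimension, and the first inclusion places its image inside the connected open set $P$; it therefore suffices to prove that $\Psi_h\colon Teich(S)\to P$ is proper, since then its image is both open (invariance of domain) and closed in $P$, hence all of $P$. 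This properness statement is the main obstacle. I would prove it by a compactness argument based on the classical parametrization of $Teich(S)$ by the tuple of edge lengths of the geodesic triangulation $T^*$ as a point of $\mathbb{R}_{>0}^E$: if $d_k$ leaves every compact subset of $Teich(S)$, then after passing to a subsequence some edge length $\ell_e(d_k)$ tends to $0$ or to $\infty$, and one must rule out that $\Psi_h(d_k)$ remains in a compact subset of $P$. The lower bound $\sum_{j}\psi_h(e_j)\ge\sum_i 2F(\ell_i/2)$ from the first part is useful here: an edge-cycle sum stays bounded away from $0$ as long as the boundary arcs it meets do, so what remains is a case analysis in right-angled hexagon trigonometry showing that a pinching edge forces a boundary arc around it to shrink (hence some edge-cycle sum $\to 0$, i.e.\ $\Psi_h(d_k)\to\partial P$) and a lengthening edge forces some coordinate $\psi_h(e')\to+\infty$. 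Carrying this out yields properness, hence $\Psi_h(Teich(S))=P$, and the description of $P$ as a finite intersection of open half-spaces identifies it as the required convex polytope.
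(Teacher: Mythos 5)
The paper does not actually prove Theorem~\ref{thm:image}: it is quoted verbatim from \cite{l2} (``The following two theorems are proved in \cite{l2}''), so there is no internal argument to compare yours with; what you are attempting is a proof of the cited result itself. Your first inclusion is essentially the standard computation and is fine: writing $F(u)=\int_0^u\cosh^h t\,dt$, regrouping the cycle sum hexagon by hexagon gives a contribution $F(u)+F(\ell-u)\ge 2F(\ell/2)>0$ per hexagon ($F(x)+F(\ell-x)$ is strictly convex for $h>0$ and constant for $h=0$), where $\ell$ is the boundary arc of the hexagon between the two edges used by the cycle. One caveat: with the definition of edge cycle as literally stated in this paper, consecutive edges could coincide ($e_i=e_{i+1}$), in which case the ``contribution'' of $H_i$ is a doubled single term $2F(\frac{a+b-c}{2})$, which can be negative; your remark that repeated hexagons are ``handled by the same elementary estimate'' does not address this, and indeed such degenerate cycles must be excluded (as in \cite{l2}) or the stated characterization would force $z(e)>0$ for every edge, contradicting the rest of the paper, where added edges have $\psi_h=0$.

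The genuine gap is in the reverse inclusion. Your reduction is correct in outline: by Theorem~\ref{thm:embedding} and invariance of domain the image is open in the connected open set $P$, and it suffices to show $\Psi_h\colon Teich(S)\to P$ is proper, $P$ being cut out by the finitely many simple-cycle inequalities. But the properness itself -- the statement that when a sequence of metrics degenerates, either some coordinate $\psi_h$ blows up or some simple edge-cycle sum tends to a nonpositive limit -- is exactly the hard technical content of the theorem in \cite{l2}, and you only gesture at it (``a case analysis in right-angled hexagon trigonometry \dots Carrying this out yields properness''). Moreover the dichotomy you propose is not justified as stated: when an edge length tends to $\infty$, the two half-terms $F(\frac{a+b-c}{2})$ and $F(\frac{a'+b'-c'}{2})$ of a single coordinate can tend to $+\infty$ and $-\infty$ simultaneously, so a lengthening edge need not force any $\psi_h(e')\to+\infty$, and a pinching edge need not by itself force an identified cycle sum to $0$; one must analyze which inequality degenerates when several edge lengths go to $0$ and $\infty$ at once, uniformly in $h\ge 0$. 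As it stands the proposal proves only the inclusion $\Psi_h(Teich(S))\subseteq P$ plus the (easy) convexity of $P$, and defers the surjectivity onto $P$, which is the substance of the cited theorem.
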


\section{Hyperbolic right-angled hexagon}

We will use the following notations and conventions.

Given two points $P, Q$ in the hyperbolic plan $\mathbb{H}$, the distance between $P$ and $Q$ will be denoted by $|PQ|$. If $P\neq Q$, the complete geodesic in $\mathbb{H}$ containing $P$ and $Q$ will be denoted by $\overline{PQ}.$ Suppose $H$ is a hyperbolic right-angled hexagon whose vertices are $A_1,B_1,A_2,B_2,A_3,B_3$ labeled cyclically (see Figure \ref{fig:tangent}). Let $C$ be the circle tangent to the three geodesics $\overline{A_1B_1}$, $\overline{A_2B_2}$ and $\overline{A_3B_3}$. The hyperbolic center of $C$ is denoted by $O.$
Let $X_i=C\cap \overline{A_iB_i}$ be the tangent point for $i=1,2,3.$
The geodesic $\overline{B_iA_{i+1}}$ decomposes the hyperbolic plane into two sides. The subindices are counted modulo 3, i.e., $A_4=A_1$ etc.

\begin{lemma}\label{thm:hexagon} The following holds for $i=1,2,3.$
$$|A_iB_i|+|A_{i+1}B_{i+1}|-|A_{i+2}B_{i+2}|=$$
$$\left\{
\begin{array}{rl}
2|X_iB_i|, & \mbox{if $O$ and $H$ are in the same side of $\overline{B_iA_{i+1}}$} \\
0, &  \mbox{if $O\in \overline{B_iA_{i+1}}$} \\
-2|X_iB_i|, & \mbox{if $O$ and $H$ are in different sides of $\overline{B_iA_{i+1}}$}
\end{array}
\right.
$$
\end{lemma}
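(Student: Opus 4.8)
The plan is to reduce the identity to a statement about tangent lengths from a point to a circle, using the right-angled hexagon's incircle-like circle $C$. First I would set up the key observation: since $C$ is tangent to $\overline{A_1B_1}$, $\overline{A_2B_2}$, $\overline{A_3B_3}$ at $X_1,X_2,X_3$, the two tangent segments from each vertex of $H$ to $C$ have equal length. Concretely, at the vertex $B_i$ the sides $\overline{A_iB_i}$ and $\overline{B_iA_{i+1}}$ meet; but only one of these is among the three sides tangent to $C$. So instead I would work vertex by vertex along the hexagon $H$: from $A_i$ the side $\overline{A_iB_i}$ is tangent to $C$ at $X_i$, and I want to express $|A_iX_i|$ and $|X_iB_i|$ in terms of hexagon data. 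The cleanest route is to drop perpendiculars from $O$ to the three sides $\overline{A_iB_i}$ (the feet are exactly the $X_i$) and to the other three sides $\overline{B_iA_{i+1}}$; the latter need not be tangent, so the foot lies at signed distance $\ge$ or $<$ the radius, and this is precisely what the three cases in the statement detect.

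The main computation I expect is the following. Let $r$ be the (hyperbolic) radius of $C$. Using the right-angled hexagon structure, each side $\overline{A_iB_i}$ is split by $X_i$ into $|A_iX_i| + |X_iB_i| = |A_iB_i|$. I would then show, via hyperbolic trigonometry in the right-angled quadrilateral $O X_i A_i (\text{foot on } \overline{A_iB_{i-1}})$ — wait, more carefully: the segment from $O$ to $X_i$ has length $r$ and is orthogonal to $\overline{A_iB_i}$, and the segment from $O$ to the foot $Y_i$ on $\overline{B_iA_{i+1}}$ has some length $\rho_i \ge r$ (equality iff $O \in \overline{B_iA_{i+1}}$, with the sign determined by which side of $\overline{B_iA_{i+1}}$ contains $O$ relative to $H$). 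Dropping these perpendiculars decomposes the hexagon $H$ into six right-angled quadrilaterals (Lambert quadrilaterals) with $O$ as a common vertex. In the quadrilateral at $B_i$ bounded by parts of $\overline{A_iB_i}$ and $\overline{B_iA_{i+1}}$, the two "legs" from $O$ are $O X_i$ and $O Y_i$, and the angle at $B_i$ is $\pi/2$. Standard formulas for such quadrilaterals give $|X_iB_i|$ in terms of $r$, $\rho_i$; similarly $|X_iA_i|$ from the quadrilateral at $A_i$, now using the foot on $\overline{B_{i-1}A_i}$. Summing the three contributions $|A_iB_i| = |A_iX_i| + |X_iB_i|$ with the signs $+,+,-$ as in the statement, the "$|X_iA_i|$-type" and the cross terms telescope, leaving exactly $2|X_iB_i|$ (or $0$, or $-2|X_iB_i|$) according to whether $\rho_i > r$ with $O$ on the $H$-side, $\rho_i = r$, or $\rho_i = r$...

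Hmm — I need to be more careful: the trichotomy is whether $O$ and $H$ are on the same side of $\overline{B_iA_{i+1}}$, the opposite sides, or $O$ lies on the line. When $O$ is on the far side from $H$, the perpendicular foot $Y_i$ may fall outside the segment or the decomposition into Lambert quadrilaterals degenerates; I would handle this by allowing signed lengths throughout (orienting $\overline{A_iB_i}$ from $A_i$ to $B_i$ and taking signed distances of $O$), so that a single trigonometric identity covers all three cases, and the sign of $2|X_iB_i|$ reads off the sign of the signed distance.

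The hard part will be bookkeeping the signs and the degenerate configurations: verifying that the six-quadrilateral decomposition of $H$ by the perpendiculars from $O$ is valid (or correctly interpreting it with signed areas/lengths) when $O$ lies outside $H$ or on one of the diagonals $\overline{B_iA_{i+1}}$. I expect the actual trigonometric identity — once phrased with signed distances — to fall out of a short computation with the right-angled hexagon and right-angled pentagon/quadrilateral formulas, so the real work is organizing the case analysis so that it collapses to the stated three-line answer. An alternative, possibly slicker, approach I would keep in reserve: place $O$ at the origin of the disk or upper half-plane model, use the explicit formula for the (signed) distance from $O$ to the geodesic $\overline{B_iA_{i+1}}$ in terms of the tangent points $X_i$, and compute all six lengths $|A_iX_i|, |X_iB_i|$ directly; this trades geometric insight for a transparent but longer calculation and automatically produces correct signs.
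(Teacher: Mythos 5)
Your general direction (measure the offsets of the tangent points $X_i$ along the sides $\overline{A_iB_i}$ and telescope the alternating sum) can be made to work, but as written there is a genuine gap, and the one concrete mechanism you give for detecting the three cases is wrong. You assert that the distance $\rho_i$ from $O$ to $\overline{B_iA_{i+1}}$ satisfies $\rho_i\geq r$ with equality iff $O\in\overline{B_iA_{i+1}}$: if $O$ lies on that geodesic its distance to it is $0$, not $r$, and in general $\rho_i$ bears no fixed relation to $r$; accordingly the trichotomy ``$\rho_i>r$ / $\rho_i=r$ / \dots'' from which you try to read off the answer is not the trichotomy of the lemma, as you yourself notice before the sentence trails off. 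What actually governs the three cases is on which side of $\overline{B_iA_{i+1}}$ the center $O$ lies, equivalently whether the tangent point $X_i$ lies in the interior of the side containing $B_i$ on the $H$-side, coincides with $B_i$, or lies beyond $B_i$ on the line $\overline{A_iB_i}$. You relegate precisely this sign and degeneracy bookkeeping to ``the hard part'' to be done later, but that bookkeeping is the entire content of the lemma, so the proposal does not yet contain a proof. Moreover, the telescoping you invoke silently requires the identity $|X_iB_i|=|X_{i+1}A_{i+1}|$, which you never isolate or justify.

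For comparison, the paper's proof needs no trigonometric formulas at all. Since $\overline{B_iA_{i+1}}$ is the common perpendicular of the two tangent sides $\overline{A_iB_i}$ and $\overline{A_{i+1}B_{i+1}}$ and $O$ is at distance $r$ from both, the configuration is symmetric across the perpendicular from $O$ to $\overline{B_iA_{i+1}}$ (in your language: the two Lambert quadrilaterals $OX_iB_iY_i$ and $OX_{i+1}A_{i+1}Y_i$ have three right angles and the same pair of sides $r$ and $\rho_i$ at $O$, hence are isometric), which gives exactly $|X_iB_i|=|X_{i+1}A_{i+1}|$ for all $i$. With this in hand one only has to write $|A_iB_i|$ as $|X_iA_i|+|X_iB_i|$, $|X_iA_i|$, or $|X_iA_i|-|X_iB_i|$ according to the position of $X_i$ relative to $B_i$ (the foot $X_i$ lies on the same side of $\overline{B_iA_{i+1}}$ as $O$, and equals $B_i$ exactly when $O$ lies on that geodesic), and then check the three global positions of $O$: interior to $H$, on one side $\overline{B_iA_{i+1}}$, or exterior across exactly one such side; the alternating sums collapse to $2|X_iB_i|$, $0$, or $-2|X_iB_i|$. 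If you repair your argument you will essentially be reproducing this; your reserve plan of an explicit computation with $O$ at the origin of a model would also work, but it is far longer than necessary.
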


\begin{proof} Since $X_j$ is the tangent point for $j=1,2,3,$ we have
\begin{equation}\label{fm:1}
|X_jB_j|=|X_{j+1}A_{j+1}|.
\end{equation}
According to the location of $O$ with respect to the hexagon, we have three cases to consider.

\begin{figure}
\begin{center}
\includegraphics[scale=.63]{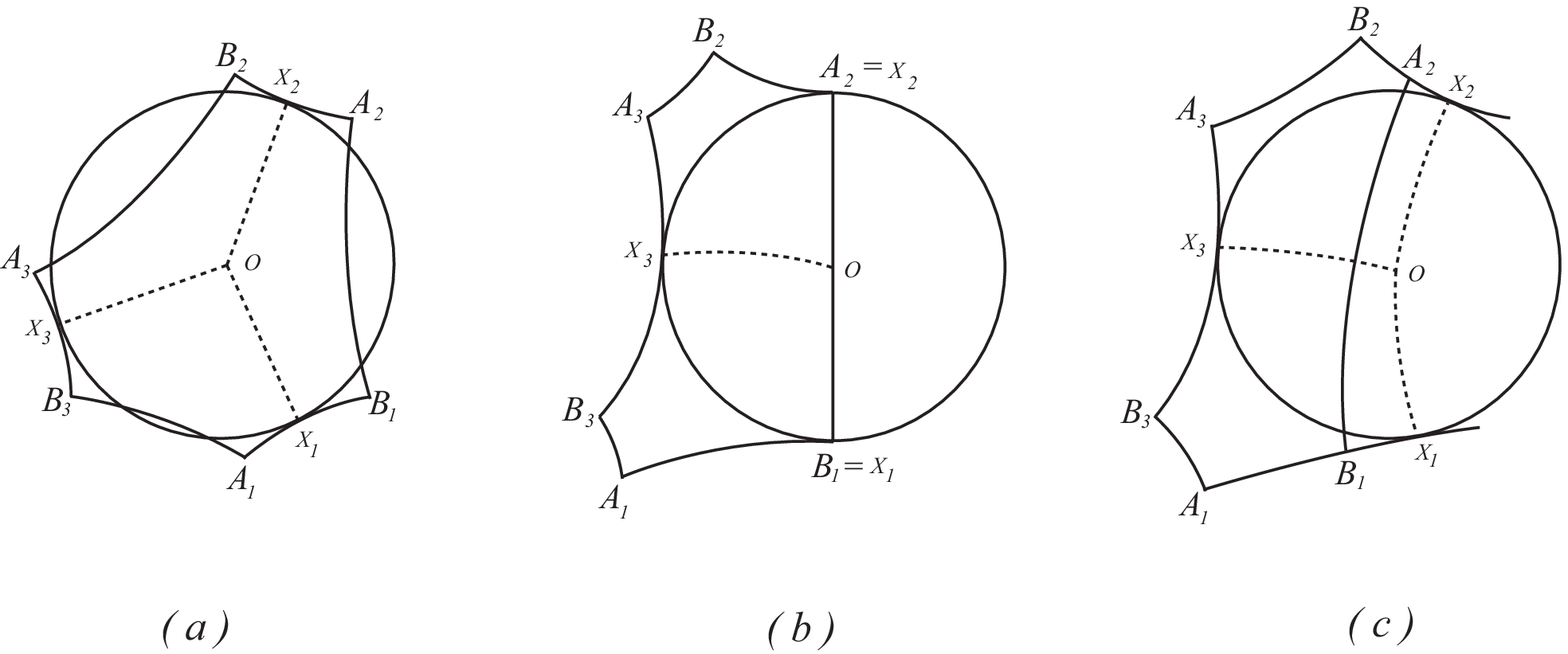}
\end{center}
\caption{\label{fig:tangent}}
\end{figure}

Case 1. If $O$ is in the interior of the hexagon, see Figure \ref{fig:tangent}(a). We have, for $j=1,2,3$,
$$|A_jB_j|=|X_jA_j|+|X_jB_j|.$$
Combining with (\ref{fm:1}), we obtain $|A_jB_j|+|A_{j+1}B_{j+1}|-|A_{j+2}B_{j+2}|=2|X_jB_j|.$ Thus we have verified the lemma in this case since $O$ and $H$ are in the same side of $\overline{B_jA_{j+1}}$ for each $j=1,2,3.$

Case 2. If $O$ is in the boundary of the hexagon, without of losing generality, we assume $O\in \overline{B_1A_2}$. See Figure \ref{fig:tangent}(b). We have
$$|A_1B_1|=|X_1A_1|,$$
$$|A_2B_2|=|X_2B_2|,$$
$$|A_3B_3|=|X_3A_3|+|X_3B_3|.$$
Combining with (\ref{fm:1}), we obtain
$$|A_1B_1|+|A_2B_2|-|A_3B_3|=0,$$
$$|A_2B_2|+|A_3B_3|-|A_1B_1|=2|X_2B_2|,$$
$$|A_3B_3|+|A_1B_1|-|A_2B_2|=2|X_3B_3|.$$
Thus we have verified the lemma in this case since $O\in \overline{B_1A_2}$, $O$ and $H$ are in the same side of $\overline{B_2A_3}$ and in the same side of $\overline{B_3A_1}.$

Case 3. If $O$ is outside of the hexagon $H$, without of losing generality, we may assume $O$ and $H$ are in the same side of $\overline{B_2A_3}$ and in the same side of $\overline{B_3A_1}$, but in different sides of $\overline{B_1A_2}.$ See Figure \ref{fig:tangent}(c). We have
$$|A_1B_1|=|X_1A_1|-|X_1B_1|,$$
$$|A_2B_2|=|X_2B_2|-|X_2A_2|,$$
$$|A_3B_3|=|X_3A_3|+|X_3B_3|.$$
Combining with (\ref{fm:1}), we obtain
$$|A_1B_1|+|A_2B_2|-|A_3B_3|=-2|X_1B_1|,$$
$$|A_2B_2|+|A_3B_3|-|A_1B_1|=2|X_2B_2|,$$
$$|A_3B_3|+|A_1B_1|-|A_2B_2|=2|X_3B_3|.$$
Thus we have verify the lemma in this case.
\end{proof}

\section{Delaunay decompositions}

\begin{figure}
\begin{center}
\includegraphics[scale=.5]{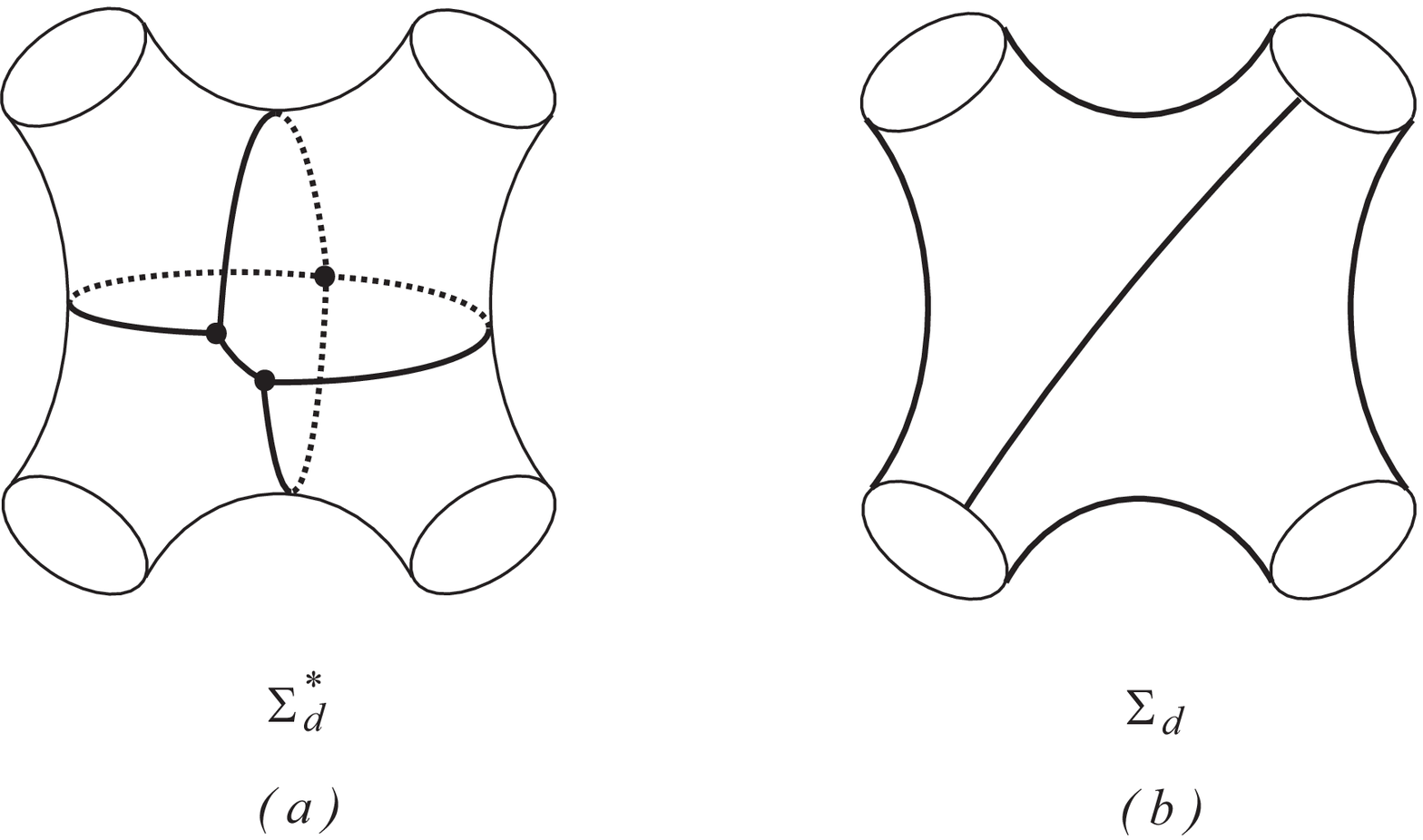}
\end{center}
\caption{\label{fig:spine}}
\end{figure}

Let's recall the construction of the Delaunay decomposition
associated to a hyperbolic metric following Bowditch-Epstein
\cite{be}. For higher dimensional hyperbolic manifolds, see
Epstein-Penner \cite{ep} and Kojima \cite{k}.

Let $(S, d)$ be a hyperbolic metric with geodesic boundary on the
compact surface $S$. Let $d$ be a
hyperbolic metric with geodesic boundary on $S$.  The Delaunay
decomposition of $(S, d)$ produces a graph $\Sigma_d^*$, called
the \it spine \rm of the surface $S$ so that $\Sigma_d^*$ is the
set of points in $S$ which have two or more distinct shortest
geodesics to $\partial S$.

To be more precise, let $n(p)$ be
the number of shortest geodesics arcs from $p$ to $\partial S$. The spine
$\Sigma^*_d$ of $(S,d)$ is the set $\{p\in S|n(p)\geq 2\}$. And
the vertex of $\Sigma^*_d$ is the set $\{p\in S|n(p)\geq 3\}.$ The
set $\Sigma^*_d$ is shown (see Bowditch-Epstein \cite{be}, Kojima \cite{k}) to be a graph whose edges are geodesic arcs in $S$. The the edges of $\Sigma^*_d$ are
denoted by $e^*_1,...,e^*_N.$ By the construction, each of point
in the interior of an edge $e^*_i, i=1,...,N,$ has precisely two distinct shortest
geodesics to $\partial S$. Each edge $e^*_i$ connects the two
vertices which are the points having three or more distinct
shortest geodesics to $\partial S$. By \cite{k} or \cite{be}, it is known that
$\Sigma^*_d$ is a strong deformation retract of the surface $S$.

Associated with the spine $\Sigma_d^*$ is the so called \textit{Delaunay
decomposition} of the hyperbolic surface. Here is the
construction.

For each edge $e^*$ of the spine, there are two boundary components
$B_1$ and $B_2$ (may be coincide) of the surface so that points in the
interior of $e^*$ have exact two shortest geodesic arcs $a_1$ and $a_2$ to
$B_1$ and $B_2$.  Let $e$ be the shortest geodesic from $B_1$ to $B_2$. It is
known that $e$ is homotopic to $a_1 \cup a_2$ and $e$ intersects $e^*$
perpendicularly. Furthermore, these edges $e$'s are pairwise disjoint.
The collection of all such $e$'s decompose the surface $S$ into a
collection of right-angled polygons. These are the 2-cell, or the
Delaunay domains.  We use $\Sigma_d$ to denote the cell decomposition
of the surface $S$ whose 2-cells are the Delaunay domains, whose
1-cells consist of these $e$'s and the arcs in the boundary of $S$.
One can think of $\Sigma_d^*$ as a dual to $\Sigma_d$ as follows. For
each 2-cell $D$ in $\Sigma_d$, there is exactly one vertex $v$ of
$\Sigma_d^*$ so that $v$ lies in the interior of $D$. Furthermore, by the
construction, $v$ is of equal distance to all edges of $D \cap
\partial S$. Consider the hyperbolic circle in $S$ centered at $v$ so
that it is tangent to all edges in $D \cap \partial S$. We call it
the \it inscribed circle \rm of the Delaunay domain $D$.

Figure \ref{fig:spine}(a) is an example of the spine of a four-hole sphere, where the spine
is the graph of thick lines. In Figure \ref{fig:spine}(b), the thick lines produce a Denaulay decomposition.

\section{Proof of the main theorem}

\subsection{Construction of the homeomorphism}

\begin{figure}
\begin{center}
\includegraphics[scale=.5]{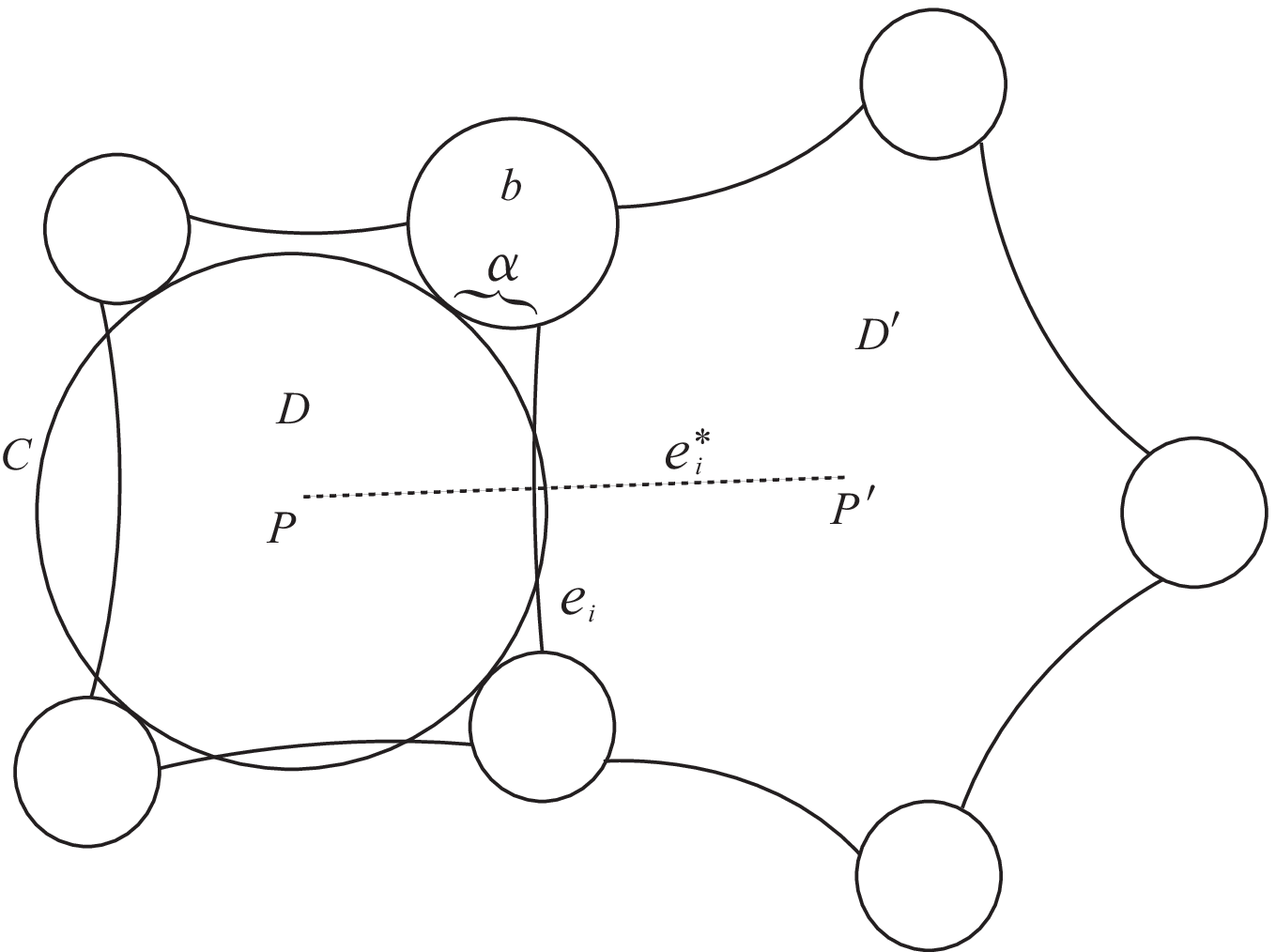}
\end{center}
\caption{\label{fig:definition}}
\end{figure}

To prove Theorem \ref{thm:main}, for each $h \geq 0$, we construct
the map $\Pi_h: Teich(S)\to |A(S)-A_\infty(S)|\times
\mathbb{R}_{>0}$ as follows. Given a hyperbolic metric $d$ with geodesic boundary,
we obtain the spine $\Sigma^*_d$ and the Delaunay
decomposition $\Sigma_d$ of $S$ in the metric $d$. Let
$(e_1^*,...,e_N^*)$ be the edges of the spine and $(e_1,...,e_N)$
be the edges of the Delaunay decomposition where $e_i$ is dual to
$e_i^*.$ See Figure \ref{fig:definition}. Suppose $e_i$ is shared
by two 2-cells $D$ and $D'$. The inscribed circle of $D$ is denoted by $C$. Let $b$ be one of the two edges of $D$ adjacent to the edges $e_i$. Let $\alpha$ be the length of the arc contained in $b$ with end
points $C\cap b$ and $e_i\cap b$. Similarly, we find the inscribed circle of $D'$ and the length $\alpha'$. Now define a function for
each $h\geq 0$:
\begin{equation}\label{fml:pi}
\pi_h(e_i)=\int_0^\alpha \cosh^h(t) dt +
\int_0^{\alpha'} \cosh^h(t)dt.
\end{equation} 
Note that, due to Delaunay
condition,  $\alpha,\alpha'$ are positive for each $i$. Therefore
$\pi_h(e_i)>0$ for each $i$. 

It is clear from the definition that the Delaunay decomposition and the coordinates $\pi_h(e_i)$ depend only on the isotopy class of the hyperbolic metric. In other words, they are independent of the choice of a representative of a point of the Teichm\"uller spaces $Teich(S)$. A point of $Teich(S)$ is denoted by $[d]$. 
We obtain a well-defined map
\begin{align}\label{fml:Pi}
\Pi_h:Teich(S)&\to |A(S)-A_\infty(S)|\times \mathbb{R}_{>0}\\
[d] &\mapsto (\sum_{i=1}^N\frac{\pi_h(e_i)}{\sum_{i=1}^N\pi_h(e_i)}\cdot[e_i], \sum_{i=1}^N\pi_h(e_i)),\notag
\end{align}
where $(e_1,...,e_N)$ are the edges of the Delaunay
decomposition of $(S,d)$ and $[e_i]$ is a isotopy class.
Note that $\sum_{i=1}^N\frac{\pi_h(e_i)}{\sum_{i=1}^N\pi_h(e_i)}\cdot[e_i]$ is
a point in the fillable simplex with vertices $[e_1],...,[e_N]$ of the arc complex,
since the sum of the coefficient of the vertices is 1 and $\pi_h(e_i)>0$ for all $i$.

In the rest of the section, we will show that $\Pi_h$ is injective, onto, and is a homeomorphism.
 
\subsection{One-to-one}

We claim that the map $\Pi_h$ is one-to-one. Suppose there are two hyperbolic
 metrics $d,d'$ such that $\Pi_h([d])=\Pi_h([d'])$.  Then their
 associated Delaunay decompositions are the same by definition.
 Say $\{e_1, ..., e_N\}$ is the set of edges in $\Sigma_d
 =\Sigma_{d'}$. 
If $N=6g-6+3n$ where $g$ is the genus and $n$ is the number of boundary components of $S$, then $(e_1,...,e_N)$ is an ideal triangulation. In this case each 2-cell is a right-angled hexagon. Suppose edge $e_i$ is shared by hexagons $D$ and $D'$. See Figure \ref{fig:triangulation}.

\begin{figure}
\begin{center}
\includegraphics[scale=.35]{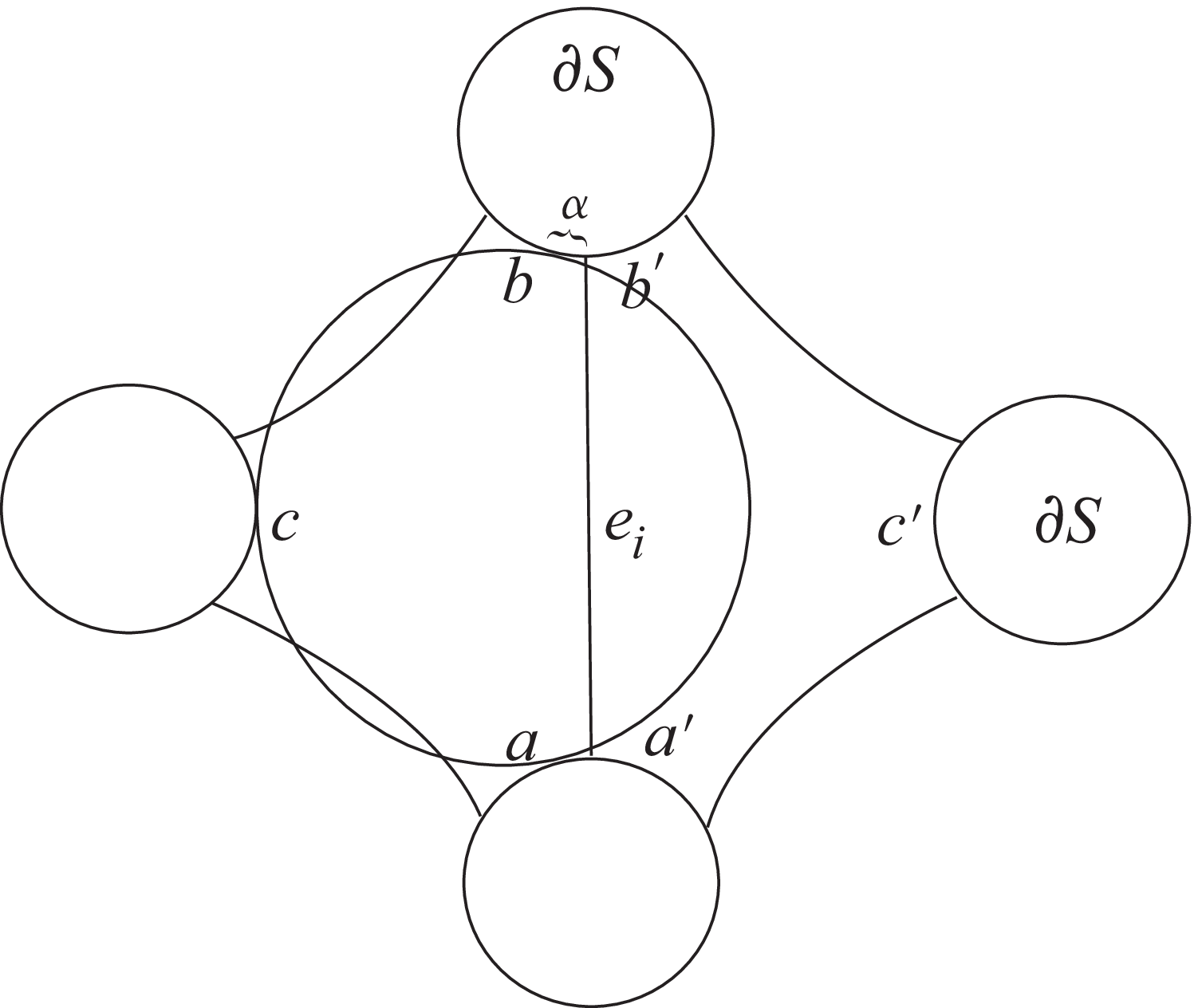}
\end{center}
\caption{\label{fig:triangulation}}
\end{figure}

Let $c$ be the length of boundary arc opposite to $e_i$ and $a,b$ be lengths of boundary arcs adjacent to $e_i$ in $D$. Since the center of the inscribed circle of $D$ and the hexagon $D$ are in the same side of $e_i$, by Lemma \ref{thm:hexagon}, we have $a+b-c=2\alpha$. Similarly, for hexagon $D'$, we have $a'+b'-c'=2\alpha'$. Thus
\begin{align*}
\pi_h(e_i)&=\int_0^\alpha \cosh^h(t) dt + \int_0^{\alpha'} \cosh^h(t)dt\\
&=\int_0^{\frac{a+b-c}{2}} \cosh^h(t) dt +
\int_0^{\frac{a'+b'-c'}{2}} \cosh^h(t)dt\\
&=\psi_h(e_i)
\end{align*}
where $\psi_h(e_i)$ is exactly the $\psi_h$-coordinate of a hyperbolic metric evaluated at $e_i$.
Thus from $\Pi_h([d])=\Pi_h([d'])$ we obtain $\Psi_h([d])=\Psi_h([d'])$ for the ideal triangulation $(e_1,...,e_N)$, $N=6g-6+3n$. By Theorem \ref{thm:embedding}, we see that $[d]=[d']\in Teich(S)$.

If $N<6g-6+3n$, we add edges $e_{N+1},...,e_{6g-6+3n}$ such that $(e_1,...,e_N,e_{N+1},$ $...,e_{6g-6+3n})$ is an ideal triangulation. More precisely, in a 2-cell of the Delaunay decomposition which is not a hexagon, we add arbitrarily geodesic arcs perpendicular to boundary components bounding the 2-cell which decompose the 2-cell into a union of hexagons. 

See Figure \ref{fig:cell}(a). Suppose edge $e_i, i\leq N,$ is shared by two 2-cells $D,D'$. 
There is a hyperbolic right-angled hexagon $H$ contained in $D$ having $e_i$ as an edge. Note that $H$ is a component of $S-\cup_{i=1}^{6g-6+3n}e_i.$ 
Recall that the inscribed circle $C$ of $D$ is also the inscribed circle of $H$. Let $c$ be the length of boundary arc opposite to $e_i$ and $a,b$ be lengths of boundary arcs adjacent to $e_i$ in $H$. Since the center of $C$ and $H$ are in the same side of $e_i$, by Lemma \ref{thm:hexagon}, we have $a+b-c=2\alpha,$ where $\alpha$ is the length in the definition of $\pi_h(e_i).$ From the 2-cell $D'$, we obtain hexagon $H'$ and $a'+b'-c'=2\alpha'.$ Therefore
\begin{align*}
\pi_h(e_i)&=\int_0^\alpha \cosh^h(t) dt + \int_0^{\alpha'} \cosh^h(t)dt\\
&=\int_0^{\frac{a+b-c}{2}} \cosh^h(t) dt +
\int_0^{\frac{a'+b'-c'}{2}} \cosh^h(t)dt\\
&=\psi_h(e_i)
\end{align*}

\begin{figure}
\begin{center}
\includegraphics[scale=.6]{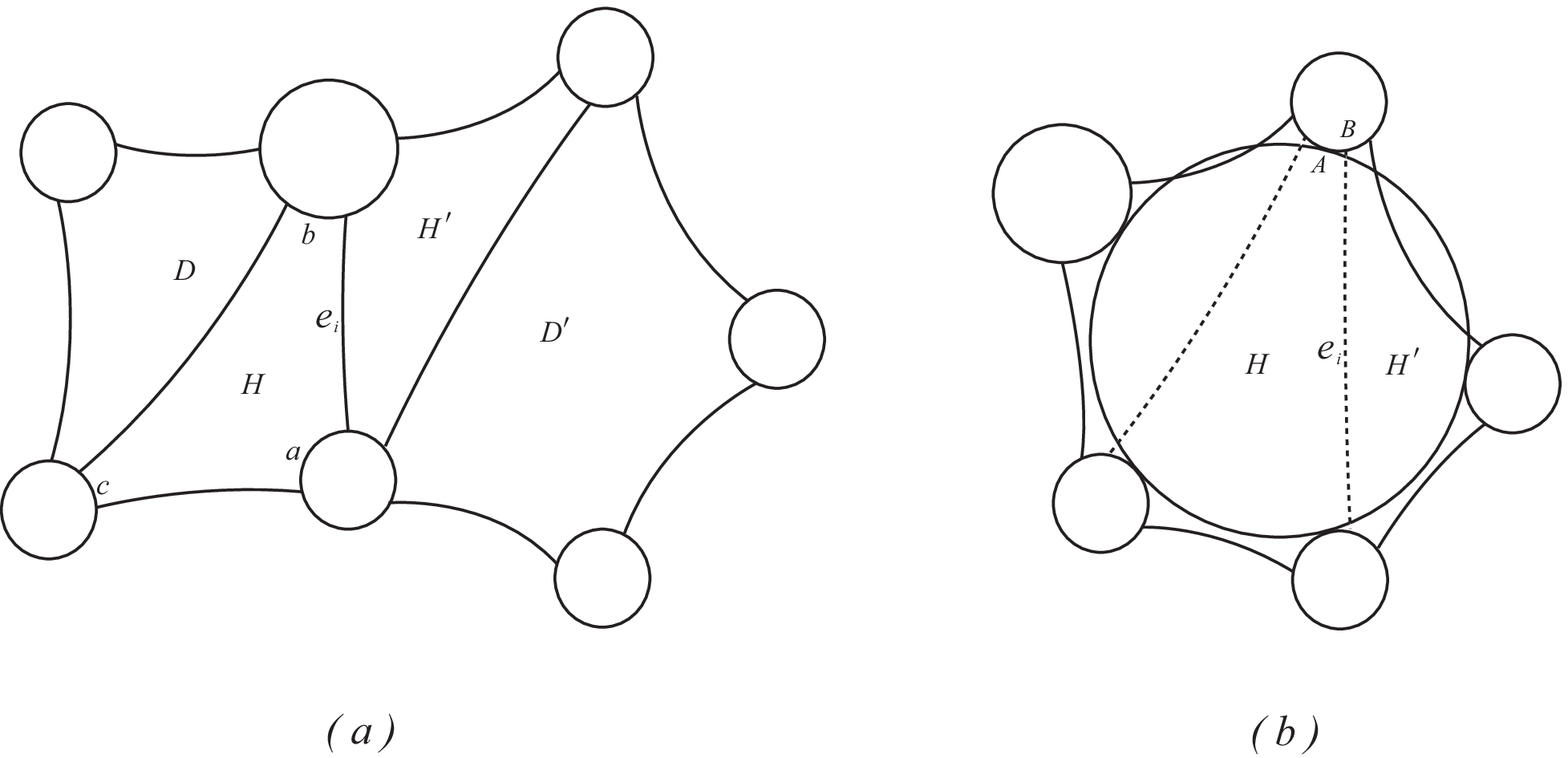}
\end{center}
\caption{\label{fig:cell}}
\end{figure}

See Figure \ref{fig:cell}(b). Suppose edge $e_i, i>N,$ is shared by two hexagon $H,H'$ in the ideal triangulation $(e_1,...,e_N,e_{N+1},...,e_{6g-6+3n})$, where $H$ and $H'$ are obtained from the same 2-cell. Therefore $H$ and $H'$ have the same inscribed circle $C$ which is also the inscribed circle of the 2-cell containing $H,H'.$ In hexagon $H$, let $c$ be the length of boundary arc opposite to $e_i$ and $a,b$ be lengths of boundary arcs adjacent to $e_i$. In hexagon $H'$, we define $a',b',c'$. There are two possibilities to consider. If the center of $C$ is in $e_i.$ Then by Lemma \ref{thm:hexagon}, $a+b-c=0=a'+b'-c'$. If the center of $C$ is not in $e_i,$ without of losing generality, we assume the center and $H$ are in the same side of $e_i.$ Denote by $A$ the tangent point of $C$ at a boundary component. Denote by $B$ the intersection point of $e_i$ with the same boundary component. By Lemma \ref{thm:hexagon}, we have $a+b-c=2|AB|$ and $a'+b'-c'=-2|AB|.$ The two possibilities give the same conclusion, for $i>N$,
$$\psi_h(e_i)=\int_0^x \cosh^h(t) dt +
\int_0^{-x} \cosh^h(t)dt=0.$$

Thus from $\Pi_h([d])=\Pi_h([d'])$ we obtain $\Psi_h([d])=\Psi_h([d'])$ for the ideal triangulation $(e_1,...,e_N,e_{N+1},...,e_{6g-6+3n})$. In fact the $i$-th entry of $\Psi_h([d])=\Psi_h([d'])$ is zero as $N+1\leq i \leq 6g-6+3n$. By Theorem \ref{thm:embedding}, we see that $[d]=[d']\in Teich(S)$.

\subsection{Onto}

We claim the map $\Pi_h:Teich(S)\to |A(S)-A_\infty(S)|\times \mathbb{R}_{>0}$ is onto. Given a point $(\sum_{i=1}^Nz_i\cdot[e_i],x).$ If $N=6g-6+3n,$ then $(e_1,...,e_N)$ is an ideal triangulation of $S$. The vector $(xz_1,...,xz_N)$ satisfies the condition in Theorem \ref{thm:image} since each entry is positive. By Theorem \ref{thm:image}, there is a hyperbolic metric $d$ whose $\psi_h$-coordinate is $(xz_1,...,xz_N)$, i.e., $\psi_h(e_i)=xz_i.$ Since we have shown in last subsection that $\pi_h(e_i)=\psi_h(e_i)$ in this case. Therefore $\Pi_h([d])=(\sum_{i=1}^Nz_i\cdot[e_i],x)$.

If $N<6g-6+3n,$ then $e_1,...,e_N$ is a cell decomposition of $S$. Let $T$ be an ideal triangulation $(e_1,...,e_N,e_{N+1},...,e_{6g-6+3n})$ obtained from the cell decomposition. Then the vector $(xz_1,...,xz_N,0,...,0)$ (there are $6g-6+3n-N$ zeros) satisfies the condition in Theorem \ref{thm:image} since there does not exists an edge cycle consisting of only the ``new" edges $e_i,i>N.$ By Theorem \ref{thm:image}, there is a hyperbolic metric $d$ whose $\psi_h$-coordinate is $(xz_1,...,xz_N,0,...,0)$, i.e., $\psi_h(e_i)=xz_i,i\leq N$ and $\psi_h(e_i)=0, i>N$.

Suppose edge $e_i, i>N$ is shared by two hexagons $H,H'$. By the discussion of last subsection, from $\psi_h(e_i)=0$ we conclude that the inscribed circles of $H$ and $H'$ have the same tangent points at the two boundary components intersecting $e_i$. Therefore the two circles have the same center. Thus they coincide. If a 2-cell is decomposed into several hexagons, then the inscribed circles of all the hexagons coincide. This shows that the 2-cell has a inscribed circle. Thus the cell decomposition $(e_1,...,e_N)$ is the Delaunay decomposition of $(S,h).$

For edge $e_i,i\leq N,$ from the discussion of last subsection, we see $\pi_h(e_i)=\psi_h(e_i).$ Therefore $\Pi_h([d])=(\sum_{i=1}^Nz_i\cdot[e_i],x)$.

\subsection{Continuity of $\Pi_h$}

We follow the idea in \S8 and \S9 of Bowditch-Epstein \cite{be} to prove the continuity. 

Let $\{d^s\}_{s=1}^{\infty}$ be a sequence of hyperbolic metrics on $S$ with geodesic boundary converging to a hyperbolic metric $d$ with geodesic boundary. We claim that the sequence of points $\{\Pi_h([d^s])\}_{s=1}^{\infty}$ converges to the point $\Pi_h([d])$. 

Case 1. If, for $s$ sufficiently large, the Delaunay decomposition associated to $d$ has the same combinatorial type as the Delaunay decomposition associated to $d^s$. Assume that the Delaunay decomposition associated to $d$ has the edges  $e_1,...,e_N$ with $N\leq 6g-6+3n$ and the Delaunay decomposition associated to $d^s$ has the edges $e_1^s,...,e_N^s$ so that $e_i^s$ is isotopic to $e_i$ for $1\leq i \leq N.$ Since the metrics $\{d^s\}$ converge to the metric $d$, the geodesic length of edges $\{e_i^s\}$ converge to the geodesic length of the edge $e_i$. 

Assume that the edge $e_i$ is shared by two 2-cells $D$ and $D'$ of $(S,d)$. Correspondingly, the edge $e_i^s$ is shared by two 2-cells $D^s$ and $D'^s$ of $(S,d^s)$. As in \S 5.1 and Figure \ref{fig:definition}, let $C$ be the inscribed circle of $D$ and $b$ be one of the two edges of $D$ adjacent to $e_i$. Let $\alpha$ be the length of the arc contained in $b$ with end points $C\cap b$ and $e_i\cap b$. Let $\alpha^s$ be the length of the corresponding arc in $D^s$. Assume $e_{D1},...,e_{Dt}\in \{e_1,...,e_N\}$ are the edges of $D$ in the interior of $S$. By the elementary hyperbolic geometry, the radius of $C$ is a continuous function of the lengths of $e_{D1},...,e_{Dt}$. Therefore $\alpha$ is a continuous function of the lengths of $e_{D1},...,e_{Dt}$. Thus the sequence $\{\alpha^s\}$ converges to $\alpha$. By the same argument, for the 2-cell $D'$, we have the length $\alpha'$ and $\alpha'^s$ so that the sequence $\{\alpha'^s\}$ converges to $\alpha'$. By the definition (\ref{fml:pi}), the sequence $\{\pi_h(e_i^s)\}$ converges to $\pi_h(e_i)$. By the definition (\ref{fml:Pi}), the sequence of points $\{\Pi_h([d_s])\}_{s=1}^{\infty}$ converges to the point $\Pi_h([d])$. Geometrically, this is a sequence of interior points in a simplex of the arc complex converging to an interior point in the same simplex. 

Case 2.  If for $s$ sufficiently large, the Delaunay decomposition associated to $d^s$ have the same combinatorial type with each other but different from that associated to $d$. Assume that the Delaunay decomposition associated to $d$ has the edges $e_1,...,e_N$ with $N< 6g-6+3n$ and the Delaunay decomposition associated to $d^s$ has the edges $e_1^s,...,e_N^s,e_{N+1}^s,...,e_{N+M}^s$ with $N+M\leq 6g-6+3n$ so that $e_i^s$ is isotopic to $e_i$ for $1\leq i \leq N.$

Since $e_j^s$ is isotopic to $e_j^{s'}$ for $N+1\leq j \leq N+M$ and $s,s'$ sufficiently large, we can add an edge $e_j$ on $(S,d)$ which is isotopic to $e_j^s$ for $N+1\leq j \leq N+M$. Now the edges $e_1,...,e_N,e_{N+1},...,e_{N+M}$ produce a cell decomposition of $S$ which has the same combinatorial type with the cell decomposition obtained from the edges $e_1^s,...,e_N^s,e_{N+1}^s,...,e_{N+M}^s$. 

We get the same situation of Case 1. The convergence of metrics implies the convergence of the edge lengths which implies the convergence of the $\pi_h$-coordinates. In Case 2, since the edges $e_{N+1},...,e_{N+M}$ are added to a Delaunay decomposition, we know from \S 5.2 that $\pi_h(e_j)=0$ as $N+1\leq j \leq N+M$. Geometrically, this is a sequence of interior points in the a simplex of the arc complex converging to a point on the boundary of the simplex.  

\subsection{Continuity of $\Pi_h^{-1}$}

Let $\{p^s\}_{s=1}^{\infty}$ be a sequence of points in $|A(S)-A_\infty(S)|\times \mathbb{R}_{>0}$ converging to a point $p$. We claim that the sequence of hyperbolic metrics $\{\Pi_h^{-1}(p^s)\}$ converges to the hyperbolic metric $\Pi_h^{-1}(p)$. 

Case 1. If, for $s$ sufficiently large, $\{p^s\}$ and $p$ are in the same simplex, then the Delaunay decomposition associated to $\Pi_h^{-1}(p^s)$ and $\Pi_h^{-1}(p)$ have the same combinatorial type. If it is needed, by adding edges in the 2-cells which are not hexagons, we obtain a fixed topological ideal triangulation of the surface $S$. Note that the $\pi_h(e)=0$ if $e$ is an edge being added. For an edge $e_i$ on $(S,\Pi_h^{-1}(p)),$ denote by $e_i^s$ the corresponding edge on $(S,\Pi_h^{-1}(p^s))$.  
Now we have a fixed ideal triangulation and that the sequence of coordinates $\{\pi_h(e_i^s)\}$ converges to the coordinate $\pi_h(e_i)$ for each edge $e_i$. By \S 5.2, $\pi_h(e_i^s)=\psi_h(e_i^s)$ and  $\pi_h(e_i)=\psi_h(e_i)$. Therefore the sequence of coordinates $\{\psi_h(e_i^s)\}$ converges to the coordinate $\psi_h(e_i)$ for each edge $e_i$. By Theorem \ref{thm:embedding}, the sequence of hyperbolic metrics $\{\Pi_h^{-1}(p^s)\}$ converges to the hyperbolic metric $\Pi_h^{-1}(p)$. 

Case 2. If, for $s$ sufficiently large, $\{p^s\}$ are in the interior of a simplex and $p$ is on the boundary of the simplex. Assume that the Delaunay decomposition associated to $\Pi_h^{-1}(p)$ has the edges $e_1,...,e_N$ with $N< 6g-6+3n$ and the Delaunay decomposition associated to $\Pi_h^{-1}(p^s)$ has the edges $e_1^s,...,e_N^s,e_{N+1}^s,...,e_{N+M}^s$ with $N+M\leq 6g-6+3n$ so that $e_i^s$ is isotopic to $e_i$ for $1\leq i \leq N.$ We can add an edge $e_j$ on $(S,\Pi_h^{-1}(p))$ which is isotopic to the edge $e_j^s$ for $N+1\leq j \leq N+M$. By the assumption that $\{\pi_h(e_i^s)\}$ converges to $\pi_h(e_i)$ for $1\leq i \leq N$ and $\{\pi_h(e_j^s)\}$ converges to $0$ for $N+1\leq j \leq N+M$. Since $e_j$ is added to the Delaunay decomposition of $\Pi_h^{-1}(p)$, $\pi_h(e_j)=0$ as $N+1\leq j \leq N+M$. We get the situation of Case 1. We may add more edges to obtain a fixed ideal triangulation. The same arguments of Case 1 can be used to establish the claim. 

To sum up, we have proved Theorem \ref{thm:main}:
$$\Pi_h: Teich(S)\to |A(S)-A_\infty(S)|\times \mathbb{R}_{>0}$$ is a homeomorphism.


\begin{thebibliography}{99}

\bibitem{be}
B. H. Bowditch $\&$ D. B. A. Epstein, \textit {Natural
triangulations associated to a surface}.  Topology  27  (1988),
no. 1, 91--117.

\bibitem{ep}
D. B. A. Epstein $\&$ R. C. Penner, \textit {Euclidean
decompositions of noncompact hyperbolic manifolds}. J.
Differential Geom.  27  (1988),  no. 1, 67--80.

\bibitem{har1}
John L. Harer, \textit {The virtual cohomological dimension of the mapping class group of an orientable surface}.
Invent. Math. 84 (1986), no. 1, 157--176.

\bibitem{ha}
G. P. Hazel, \textit {Triangulating Teichm\"uller space using the
Ricci flow}. PhD thesis, University of California San Diego, 2004.
\newline
available at www.math.ucsd.edu/$\sim$thesis/thesis/ghazel/ghazel.pdf

\bibitem{k}
Sadayoshi Kojima, \textit {Polyhedral decomposition of hyperbolic 3-manifolds with totally
geodesic boundary}. Aspects of low-dimensional manifolds, 93--112, Adv. Stud. Pure Math., 20,
Kinokuniya, Tokyo, 1992.

\bibitem{l1}
Feng Luo, \textit {On Teichm\"uller spaces of surfaces with boundary}. Duke
Math. J. 139 (2007), no. 3, 463--482.

\bibitem{l2}
Feng Luo, \textit {Rigidity of polyhedral surfaces}. Preprint, arXiv:math.GT/0612714

\bibitem{m1}
Gabriele Mondello, \textit {Triangulated Riemann surfaces with boundary and the Weil-Petersson Poisson structure}.
J. Differential Geometry 81 (2009), pp. 391-436.

\bibitem{p}
R. C. Penner, \textit {The decorated Teichm\"uller space of punctured surfaces}.
Comm. Math. Phys.  113  (1987),  no. 2, 299--339.

\bibitem{u}
Akira Ushijima, \textit {A canonical cellular decomposition of the Teichm\"uller space of compact surfaces with boundary}.  Comm. Math. Phys.  201  (1999),  no. 2, 305--326.



\end{thebibliography}
\end{document}